\documentclass[11pt,reqno]{amsart}
\usepackage[T1]{fontenc}
\usepackage{lmodern}
\usepackage[letterpaper,margin=1in]{geometry}
\usepackage{amssymb,mathtools,microtype}
\usepackage{xcolor}
\definecolor{linkblue}{RGB}{30,65,100}
\usepackage[colorlinks=true,linkcolor=linkblue,citecolor=linkblue,
  urlcolor=linkblue,breaklinks=true]{hyperref}
\hypersetup{pdftitle={A quasidiagonal C*-algebra with a nonquasidiagonal maximal tensor square},pdfauthor={Mehdi Moradi}}

\newtheorem{theorem}{Theorem}[section]
\newtheorem{proposition}[theorem]{Proposition}
\newtheorem{lemma}[theorem]{Lemma}
\theoremstyle{definition}
\newtheorem{definition}[theorem]{Definition}
\theoremstyle{remark}
\newtheorem{remark}[theorem]{Remark}
\numberwithin{equation}{section}

\newcommand{\C}{\mathbb C}
\newcommand{\Z}{\mathbb Z}
\newcommand{\Q}{\mathbb Q}
\newcommand{\F}{\mathbb F}
\newcommand{\B}{\mathcal B}
\newcommand{\cH}{\mathcal H}
\newcommand{\cK}{\mathcal K}
\newcommand{\cP}{\mathcal P}
\newcommand{\SL}{\mathrm{SL}}
\newcommand{\GL}{\mathrm{GL}}
\newcommand{\tmax}{\otimes_{\max}}
\DeclareMathOperator{\spc}{sp}

\DeclarePairedDelimiter{\norm}{\lVert}{\rVert}

\title[Quasidiagonality and maximal tensor products]{A quasidiagonal $C^*$-algebra with a nonquasidiagonal maximal tensor square}
\author[Mehdi Moradi]{Mehdi Moradi}
\email{mehdi.moradi@utoronto.ca}
\date{Draft, version 3 --- September 16, 2026}
\keywords{Quasidiagonal $C^*$-algebra, maximal tensor product,
residual finite dimensionality, Kazhdan projection, proper isometry}

\begin{document}
\begin{abstract}
We construct a separable unital residually finite-dimensional $C^*$-algebra
$A$ whose maximal tensor square contains a proper isometry. In particular,
$A$ is quasidiagonal whereas $A\tmax A$ is not. The algebra is generated by
finite regular representations obtained from congruence quotients of an
explicit countable affine group. A property~$(T)$ subgroup determines a
diagonal Kazhdan projection in the maximal tensor square. Conjugation by a
dilation makes this projection equivalent to a proper subprojection.
Strictness is detected by a quasi-regular representation whose two factor
restrictions descend to the specified algebra $A$. We give the norm estimate
needed for this descent and construct the proper isometry explicitly.
\end{abstract}
\maketitle

\section{Introduction}

This note addresses preservation of quasidiagonality under the maximal
$C^*$-tensor product. The question is explicitly formulated in Brown's survey
\cite[Question~7.6]{BrownSurvey}; the Brown--Ozawa monograph \cite{BO}
develops the approximation theory and the positive free-group example
discussed below. We give an explicit counterexample with two identical
separable unital factors. Each factor has a faithful representation that is a
direct sum of finite-dimensional representations, while its maximal tensor
square contains a proper isometry.

\begin{theorem}\label{thm:main}
There exists a nonzero separable unital residually finite-dimensional
$C^*$-algebra $A$ such that $A$ is quasidiagonal and there is an element
$V\in A\tmax A$ satisfying
\[
V^*V=1,
\qquad
VV^*\ne1.
\]
Consequently, quasidiagonality is not preserved under maximal tensor
products, even for identical separable unital factors.
\end{theorem}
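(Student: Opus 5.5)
The plan follows the outline in the abstract: build $A$ from finite quotients of an explicit affine group, produce a Kazhdan projection in $A\tmax A$, and use a dilation to make it equivalent to a proper subprojection. The step I expect to be hardest is the last one, showing that the subprojection really is proper.

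\textbf{The group and the algebra.} Fix a prime $p$ and let
\[
\Gamma=\Z[1/p]^3\rtimes\bigl(\SL_3(\Z)\times\langle\delta\rangle\bigr),
\]
where $\delta$ acts on $\Z[1/p]^3$ as the scalar dilation $x\mapsto px$. This dilation commutes with $\SL_3(\Z)$. For $m$ coprime to $p$, reduction mod $m$ gives congruence quotients $\Gamma\to\Gamma_m$, with $\Gamma_m$ finite once the $\delta$-part is also reduced to a finite cyclic quotient. Let $\pi=\bigoplus_m\lambda_{\Gamma_m}$ and set $A=\pi(C^*\Gamma)$.

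\textbf{$A$ is residually finite-dimensional.} By construction $A$ is separable, unital and RFD, since it comes with a faithful direct sum of finite-dimensional representations. Hence it is quasidiagonal, which gives the first half of Theorem~\ref{thm:main}.

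\textbf{The Kazhdan projection.} Let $\Lambda=\Z^3\rtimes\SL_3(\Z)\le\Gamma$. This group has property (T), so $C^*\Lambda$ contains a Kazhdan projection $p_\Lambda$. Its image under any representation is the projection onto the $\Lambda$-invariant vectors. The diagonal map $g\mapsto \pi(g)\otimes\pi(g)$ defines a $*$-homomorphism $C^*\Lambda\to A\tmax A$, and we let $P$ be the image of $p_\Lambda$.

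\textbf{The dilation and the isometry.} Put $u=\pi(\delta)\otimes\pi(\delta)$, a unitary in $A\tmax A$. Conjugation gives
\[
u^{*}Pu=P',
\]
the Kazhdan projection for $\delta^{-1}\Lambda\delta=(p^{-1}\Z)^3\rtimes\SL_3(\Z)$. This group contains $\Lambda$, so any vector invariant under it is $\Lambda$-invariant, and therefore $P'\le P$. Now $W=u^*P$ is a partial isometry with $W^*W=P$ and $WW^*=P'$. Define
\[
V=W+(1-P).
\]
Then $V^*V=1$ and $VV^*=1-(P-P')$.

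\textbf{The main obstacle: strictness.} It remains to show $P\ne P'$. For this I will find a representation $\rho_1\times\rho_2$ of $A\tmax A$, i.e.\ commuting representations of $A$, in which $P-P'\neq0$. I would first try the quasi-regular representation of $\Gamma\times\Gamma$ on $\ell^2$ of the coset space of the diagonal subgroup $\Delta(\Z^3\rtimes\SL_3(\Z))$. The diagonal $\Lambda$-invariant vector $\delta_{e\Delta}$ is then not invariant under the larger group $(p^{-1}\Z)^3$, which gives $P-P'\neq0$.

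The difficulty is to check that each factor restriction, which is a quasi-regular representation of $\Gamma$, is weakly contained in $\pi$, so that it factors through $A$. I would argue this with a norm estimate. The stabilizers are commensurable with congruence-closed subgroups, so the coefficient functions of the restriction should be pointwise limits of coefficients of $\lambda_{\Gamma_m}$. Equivalently, $\|\sum c_g\rho_i(g)\|\le\|\sum c_g\pi(g)\|$, and I would prove this by approximating the coset space by its finite congruence images. This descent estimate is the crux of the argument; all other steps are formal.
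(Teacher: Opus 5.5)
\textbf{There is a genuine gap, at the step you yourself call the crux.} Your construction and the formal steps coincide with the paper's, which takes $p=2$ and reduces modulo odd primes. The isometry computation is also correct, once you note that $(1-P)W=0$; this follows from $WW^*=P'\le P$.

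At the crux you never compute the stabilizers of the two factor actions on $X=(\Gamma\times\Gamma)/\Delta\Lambda$. In their place you offer ``commensurable with congruence-closed subgroups'', which does not decide whether the restrictions descend to $A$. Descent really does fail for some quasi-regular representations of $\Gamma$:
\begin{itemize}
\item In each finite quotient $\Gamma_m$, the image of $\delta^{-1}\Lambda\delta$ is a conjugate of the image of $\Lambda$ and contains it, hence equals it. So $\pi$ sends the Kazhdan projections of $\Lambda$ and of $\delta^{-1}\Lambda\delta$ to the same element of $A$.
\item In $\ell^2(\Gamma/\Lambda)$, the basis vector $\delta_\Lambda$ is fixed by $\Lambda$ but moved by the translation by $e_1/p$, which lies in $\delta^{-1}\Lambda\delta$.
\end{itemize}
Hence $\ell^2(\Gamma/\Lambda)$ does not factor through $A$. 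This is consistent with $\Lambda$ not being congruence-closed: its congruence closure contains every translation in $\Z[1/p]^3$. So whether a restriction descends depends on what the stabilizers actually are, and your sketch does not determine them.

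\textbf{The missing idea is that each factor acts \emph{freely} on $X$.} The equation $(a,1)(g_1,g_2)\Delta\Lambda=(g_1,g_2)\Delta\Lambda$ means $(g_1^{-1}ag_1,1)\in\Delta\Lambda$, which forces $a=1$; the same holds for $(1,b)$. Thus each restriction is a multiple of $\lambda_\Gamma$, and all you need is $\norm{\lambda_\Gamma(x)}\le\norm{\pi(x)}$ for $x\in\C[\Gamma]$.

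This inequality holds once you verify that your congruence quotients separate the points of $\Gamma$, which you have not yet done. Take $x$ supported in a finite set $S$ and $\eta\in\ell^2(\Gamma)$ supported in a finite set $F$, and choose $m$ with $\Gamma\to\Gamma_m$ injective on $F\cup SF$. Transporting basis vectors is then isometric on the relevant span. It intertwines $\lambda_\Gamma(x)\eta$ with $\lambda_{\Gamma_m}(x)$ applied to the image of $\eta$, which gives $\norm{\lambda_\Gamma(x)\eta}\le\norm{\pi(x)}\norm{\eta}$. This is your ``approximate by finite congruence images'' idea, applied to $\Gamma$ itself; that is legitimate only because the stabilizers are trivial.

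With this in place, both factor representations extend to commuting representations of $A$. The universal property of $\tmax$ then gives a representation of $A\tmax A$ on $\ell^2(X)$, and your strictness argument goes through. This is how the paper closes the argument, packaging the estimate as a surjection $A\to C^*_r(\Gamma)$.
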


The construction uses a countable affine group $\Gamma$ and a subgroup
$H\cong\Z^3\rtimes\SL_3(\Z)$ with property~$(T)$. There is an element
$t\in\Gamma$ such that
\[
tHt^{-1}\subsetneq H\subsetneq t^{-1}Ht.
\]
The diagonal image of the Kazhdan projection of $H$ gives a projection
$p\in A\tmax A$. With $w=u_t\otimes u_t$, the second subgroup inclusion
implies $w^*pw\le p$. A representation on the cosets of the diagonal
subgroup $\Delta H$ proves that this inequality is strict. The element
$w^*p+(1-p)$ is then the required proper isometry.

The representation on these cosets must be continuous for the particular
completion defining $A$. We establish this by a finite-support norm estimate
and by checking that each factor acts freely on the coset space. Thus both
the projection and the isometry belong to the specified maximal tensor
product.

All groups in this note are discrete, all $C^*$-algebras are complex, and
all Hilbert spaces are complex. We write $\lambda_G$ for the left regular
representation of a group $G$, and $C^*(G)$ and $C^*_r(G)$ for its full and
reduced group $C^*$-algebras. Representations of unital algebras are unital
unless degeneracy is explicitly allowed. The only substantive named group
theorem used below is property~$(T)$ of the above semidirect product,
as stated in \cite[Example~1.7.4(i)]{BHV}.

\subsection{Finite-dimensional approximation and the tensor-product question}

Quasidiagonality is a finite-dimensional approximation property with both
representation-theoretic and completely positive formulations. Voiculescu's
work \cite{Voiculescu} gives its abstract approximation criterion and the
homotopy methods used in several of the examples discussed below. Brown's
survey \cite{BrownSurvey} and the monograph of Brown and Ozawa \cite{BO}
provide systematic accounts of these methods and their permanence
properties. Residual finite dimensionality is a stronger requirement:
finite-dimensional representations themselves must separate points. For a
separable algebra, a faithful direct sum of such representations witnesses
quasidiagonality. This is the particularly concrete form of approximation
enjoyed by the factors in Theorem~\ref{thm:main}.

The minimal tensor product of two quasidiagonal algebras is quasidiagonal
\cite[Proposition~7.5]{BrownSurvey}. Finite-rank approximations to faithful
representations of the factors can be tensored on their Hilbert-space tensor
product. If either factor is nuclear, uniqueness of the tensor norm gives
the same conclusion for the maximal tensor product. Brown explicitly asks
about general maximal tensor products in \cite[Question~7.6]{BrownSurvey}.
The Brown--Ozawa treatment of quasidiagonality, especially its free-group
example \cite[Proposition~7.4.5]{BO}, supplies a central reference point for
this question. Theorem~\ref{thm:main} gives a negative answer to the general
permanence question, with identical RFD factors.

The distinction between the tensor norms is essential here. Tensoring two
faithful factor representations gives a faithful representation of the
minimal completion. For the maximal completion, the same construction
factors through the canonical map to the minimal tensor product and need
not be faithful. Thus that construction does not control all commuting
representations defining the maximal norm. Our proof detects the projection
defect using a particular pair of commuting representations on one Hilbert
space, and verifies their continuity for the chosen completion separately.

\subsection{Free groups and Ozawa's positive result}

Choi proved that the full group algebra $C^*(\mathbb F_2)$ is residually
finite dimensional \cite[Theorem~7]{Choi}; his paper also discusses the
extension to other free ranks. This gives a basic family of noncommutative
full group algebras with faithful finite-dimensional approximation.
Here and below the word \emph{full} matters: the results in this paragraph
are not assertions about reduced free-group algebras.
Exel and Loring \cite[Theorem~3.2]{EL} subsequently established permanence
of residual finite dimensionality under full free products, including
unital full free products with the scalar units identified. Thus RFD
approximation behaves well under these free-product constructions, whereas
the commuting relations imposed in a maximal tensor product present a
different problem.

The corresponding positive maximal-tensor-square result appears in
Brown--Ozawa \cite[Proposition~7.4.5]{BO} and in Ozawa's work on Tsirelson's
problem \cite[Theorem~2]{Ozawa}. In particular, for a free group
$\mathbb F_d$ of finite rank,
\[
 C^*(\mathbb F_d)\tmax C^*(\mathbb F_d)
 \cong C^*(\mathbb F_d\times\mathbb F_d)
 \quad\text{is quasidiagonal}.
\]
Ozawa explicitly credits an idea of Brown and refers to the earlier
monograph argument. His theorem also treats the maximal tensor square of a
free product of finitely many copies of a finite-dimensional commutative
algebra. The free-group proof uses a homotopy of a faithful representation
to a representation with scalar image, followed by the quasidiagonality
homotopy criterion. It does not require equality of the minimal and maximal
tensor norms.

This positive result concerns quasidiagonality, not a claimed separating
family of finite-dimensional representations of the maximal square.
Completely positive approximations that become multiplicative in norm
are different from exact finite-dimensional representations. Maintaining
this distinction is necessary when comparing Ozawa's theorem with the RFD
property of the individual factors constructed here.

\subsection{Atkinson's extension to right-angled Artin groups}

For a finite simplicial graph $\Lambda$, let
\[
 G_\Lambda=\langle s_v\ (v\in V\Lambda):
 [s_v,s_w]=1\ \text{if }\{v,w\}\in E\Lambda\rangle
\]
be its right-angled Artin group. Atkinson proves that $C^*(G_\Lambda)$ is
quasidiagonal \cite[Theorem~4.8]{Atkinson}. The theorem applies to every
finite defining graph; the special graph hypotheses in the tracial-stability
part of that paper are not hypotheses of this quasidiagonality result.
Atkinson explicitly extends the Brown--Ozawa free-group argument.
Generating unitaries in a faithful representation can be deformed to the
identity within their abelian von Neumann algebras. The prescribed
commutation relations persist during the deformation, and
\cite[Proposition~4.7]{Atkinson} then gives quasidiagonality.

To spell out the tensor-product consequence, let $\Lambda*\Omega$ be the
join of two finite graphs, obtained by adding all edges between their
vertex sets. Its defining presentation gives
$G_{\Lambda*\Omega}\cong G_\Lambda\times G_\Omega$: the relations within
each vertex set are retained and all generators from different sets commute.
The universal property of full group algebras therefore identifies
\[
 C^*(G_\Lambda)\tmax C^*(G_\Omega)
 \cong C^*(G_{\Lambda*\Omega}).
\]
Applying Atkinson's theorem to the join proves quasidiagonality of this
maximal product, including each tensor square. Taking edgeless graphs
recovers the free-group examples. This join argument is the stated
consequence of his theorem, rather than an additional hypothesis on the
graphs. Caspers and Fima \cite[Definition~2.4 and Example~2.6]{CF} supply the
broader maximal graph-product framework and its compatibility with full
group algebras. These positive cases show that the maximal tensor norm alone
is not an obstruction; the additional subgroup geometry used below is
decisive for this construction.

\subsection{Kazhdan projections, quotient obstructions, and this construction}

Wassermann's work \cite{WassermannT,WassermannQuotient} is an important
antecedent to the use of finite-dimensional group representations and
Kazhdan property~$(T)$ in failures of permanence. In particular, he
constructed a separable quasidiagonal operator algebra whose quotient by
the compact operators is not quasidiagonal. Brown discusses these examples
in \cite[Example~8.6]{BrownSurvey}. They show why quasidiagonality of a
faithfully represented block algebra cannot simply be passed to a quotient.
They also provide historical context for the spectral-gap projections
arising from property~$(T)$.

The conclusion needed here is specifically an obstruction inside the
maximal tensor product itself. Property~$(T)$ of
$\Z^3\rtimes\SL_3(\Z)$ supplies the invariant-vector projection, using
\cite[Example~1.7.4(i)]{BHV}. The strict conjugate-subgroup inclusion then
produces two equivalent projections with one properly below the other.
The finite-quotient norm estimate ensures that the coset representation
detecting strictness is a representation of the actual algebra
$A\tmax A$. Consequently the resulting proper isometry belongs to that
algebra, rather than merely to a quotient or a larger operator algebra.
This identifies the precise feature separating the construction from
quotient-based failures and from the positive free-group and Artin-group
results above. No identification of $A$ with the full group algebra
$C^*(\Gamma)$, and no permanence under quotients, is assumed.

\section{Quasidiagonality and proper isometries}

\begin{definition}
A separable $C^*$-algebra $B$ is \emph{quasidiagonal} if it admits a faithful
representation $\pi:B\to\B(\cH)$ on a separable Hilbert space and an
increasing sequence $(R_n)$ of finite-rank orthogonal projections such that
$R_n\to I_{\cH}$ strongly and
\[
\norm{[R_n,\pi(b)]}\longrightarrow0
\quad\text{for every }b\in B.
\]
The representation in this definition is allowed to be degenerate.
A $C^*$-algebra is \emph{residually finite dimensional} if its
finite-dimensional $*$-representations separate its points.
\end{definition}

We record the finiteness obstruction in a form that also covers a degenerate
faithful representation.

\begin{lemma}\label{lem:finite}
If $B$ is a unital quasidiagonal $C^*$-algebra and $s\in B$ satisfies
$s^*s=1$, then $ss^*=1$.
\end{lemma}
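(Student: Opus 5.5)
We argue directly from the definition, using a faithful (possibly degenerate) representation $\pi:B\to\B(\cH)$ and finite-rank projections $R_n\uparrow I_{\cH}$ with $\norm{[R_n,\pi(b)]}\to0$ for all $b\in B$. Put $P=\pi(1)$, a projection; it is nonzero when $B\ne0$, and the case $B=0$ is trivial. Put $S=\pi(s)$, so $S^*S=P$ and $SS^*\le P$. By faithfulness, it suffices to show $SS^*=P$, i.e.\ that $Q:=P-SS^*$ vanishes. The plan is to compress to the finite-dimensional spaces $P_nR_n\cH$, where an approximate isometry must be approximately unitary by a dimension count, and then to let $n\to\infty$.

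First we replace $R_n$ by projections commuting exactly with $P$. Since $\norm{[R_n,P]}\to0$, the operator $PR_nP$ is a positive contraction on $P\cH$ with spectrum clustering near $\{0,1\}$ for large $n$. Functional calculus, taking the spectral projection of $PR_nP$ for $[1/2,1]$, gives finite-rank projections $P_n\le P$. These satisfy $\norm{P_n-PR_nP}\to0$, $P_n\to P$ strongly (because $PR_nP\to P$ strongly), and $\norm{[P_n,\pi(b)]}\to0$ for every $b\in B$. This last property holds because $\pi(b)=P\pi(b)P$ and the commutator estimate passes through the continuous functional calculus. This perturbation is the only technical step, and it is where the degenerate case costs anything. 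If one prefers, one can instead observe that $(R_n|_{P\cH})$ already nearly commutes, and apply a standard perturbation lemma for almost-projections.

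Next, in the finite-dimensional space $P_n\cH$, consider $T_n=P_nSP_n$. From $\norm{[P_n,S]}\to0$ and $S^*S=P$ we get $\norm{T_n^*T_n-P_n}\to0$. Hence for large $n$ the operator $T_n$ is injective on $P_n\cH$ and is therefore invertible there, since the space is finite-dimensional. Let $T_n=U_n\lvert T_n\rvert$ be its polar decomposition. Then $U_n$ is unitary on $P_n\cH$, and $\norm{T_n-U_n}\to0$ because $\lvert T_n\rvert\to P_n$ in norm. It follows that $\norm{T_nT_n^*-P_n}\to0$. On the other hand, $T_nT_n^*$ is within $o(1)$ of $P_nSS^*P_n$, again by the near-commutation. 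So $\norm{P_nQP_n}=\norm{P_n(P-SS^*)P_n}\to0$.

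Finally, take $\xi\in\cH$. Then $\langle Q\xi,\xi\rangle=\lim_n\langle P_nQP_n\xi,\xi\rangle=0$, because $P_n\to P$ strongly and $QP=Q$. Since $Q\ge0$, this gives $Q=0$, so $\pi(ss^*)=\pi(1)$. Faithfulness of $\pi$ then yields $ss^*=1$. The expected main obstacle is the perturbation step producing the $P_n$. The rest is the standard observation that an isometry of a finite-dimensional space is unitary, made approximate.
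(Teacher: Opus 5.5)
Your proof is correct and is essentially the paper's argument. You compress to $\pi(1)\mathcal{H}$ and replace $\pi(1)R_n\pi(1)$ by its spectral projection for $[1/2,1]$, giving finite-rank projections that converge strongly to $\pi(1)$ and asymptotically commute with $\pi(B)$; you then use finite-dimensionality to upgrade the approximate isometry $P_nSP_n$ to an approximate coisometry and conclude from $\lVert P_n(P-SS^*)P_n\rVert\to0$ plus strong convergence. The only difference is cosmetic: you get $\lVert T_nT_n^*-P_n\rVert\to0$ via invertibility and polar decomposition, whereas the paper uses that $X_n^*X_n$ and $X_nX_n^*$ have the same eigenvalues with multiplicity, which yields the explicit bound $2\delta_n^2$ directly.
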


\begin{proof}
Choose a faithful quasidiagonal representation $\pi:B\to\B(\cH)$ with
projections $(R_n)$ as above. Put $e=\pi(1)$ and restrict the representation
to $e\cH$, where it is faithful and unital. The finite-rank positive
contractions $T_n=eR_ne$ on $e\cH$ converge strongly to its identity and
satisfy
\[
T_n-T_n^2=eR_n(1-e)R_ne,
\qquad
\norm{T_n-T_n^2}\le\norm{[R_n,e]}^2\longrightarrow0.
\]
For all sufficiently large $n$, the spectrum of $T_n$ avoids $1/2$.
Let $Q_n=\mathbf1_{[1/2,1]}(T_n)$. These are finite-rank projections, and
\[
\norm{Q_n-T_n}\le 2\norm{T_n-T_n^2}\longrightarrow0.
\]
They therefore converge strongly to the identity on $e\cH$. Moreover,
for $b\in B$,
\[
[T_n,\pi(b)|_{e\cH}]
=e[R_n,\pi(b)]e|_{e\cH}.
\]
It follows that $Q_n$ asymptotically commutes with the restricted
representation. Increasingness of $(Q_n)$ is not needed below.

Write $S=\pi(s)|_{e\cH}$ and $\delta_n=\norm{[Q_n,S]}$. On the
finite-dimensional space $Q_ne\cH$, put $X_n=Q_nSQ_n$. Since $S^*S=1$,
\[
Q_n-X_n^*X_n=Q_nS^*(1-Q_n)SQ_n,
\qquad
\norm{Q_n-X_n^*X_n}\le\delta_n^2.
\]
The two positive matrices $X_n^*X_n$ and $X_nX_n^*$ have the same
eigenvalues, counted with multiplicity. Hence
\[
\norm{Q_n-X_nX_n^*}=\norm{Q_n-X_n^*X_n}\le\delta_n^2.
\]
Also,
\[
Q_nSS^*Q_n=X_nX_n^*+Q_nS(1-Q_n)S^*Q_n,
\]
and the second summand has norm at most $\delta_n^2$. Consequently,
\[
\norm{Q_n(1-SS^*)Q_n}\le2\delta_n^2\longrightarrow0.
\]
Strong convergence of $Q_n$ to the identity forces $1-SS^*=0$.
Faithfulness then gives $ss^*=1$ in $B$.
\end{proof}

\section{The group and its finite-dimensional completion}

Let $\Z[1/2]=\{m/2^j:m\in\Z,\ j\ge0\}$ and define
\begin{equation}\label{eq:group}
\Gamma=
\left\{
\begin{pmatrix}2^kM&b\\0&1\end{pmatrix}:
k\in\Z,\ M\in\SL_3(\Z),\ b\in\Z[1/2]^3
\right\}\subset\GL_4(\Q).
\end{equation}
For calculations, write $(L,b)$ for the affine matrix with linear part $L$
and translation part $b$. Then
\[
(L,b)(N,c)=(LN,b+Lc),
\qquad
(L,b)^{-1}=(L^{-1},-L^{-1}b).
\]
These formulas show that \eqref{eq:group} is a group. The determinant of
its linear part is $2^{3k}$, so the displayed parametrization is injective.
In particular, $\Gamma$ is countable.

Let $\cP$ be the set of odd primes. For $\ell\in\cP$, reduction of matrix
entries gives a homomorphism
\[
r_\ell:\Gamma\longrightarrow\GL_4(\F_\ell).
\]
It is well defined because $2$ is invertible modulo $\ell$, and both each
matrix and its inverse have entries in $\Z[1/2]$. Put
\[
G_\ell=r_\ell(\Gamma),
\qquad
\pi_\ell=\lambda_{G_\ell}\circ r_\ell.
\]

\begin{lemma}\label{lem:separation}
For every finite subset $E\subset\Gamma$, there is an odd prime $\ell$
such that $r_\ell$ is injective on $E$.
\end{lemma}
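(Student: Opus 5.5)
Injectivity of $r_\ell$ on $E$ amounts to $r_\ell(g^{-1}h)\ne 1$ for every ordered pair of distinct elements $g,h\in E$. So we consider the finite set
\[
F=\{g^{-1}h-I_4 : g,h\in E,\ g\ne h\}\subset M_4(\Z[1/2]).
\]
Every element of $F$ is a nonzero matrix, because $g^{-1}h\ne1$ in $\Gamma\subset\GL_4(\Q)$. Its entries lie in $\Z[1/2]$, and this uses the remark before the lemma that each element of $\Gamma$ and its inverse have entries in $\Z[1/2]$.

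For each $X\in F$ we choose one nonzero entry $x_X=m_X/2^{j_X}$ with $m_X\in\Z\setminus\{0\}$. Then we take $\ell$ to be an odd prime dividing none of the finitely many nonzero integers $m_X$. Such an $\ell$ exists because there are infinitely many primes and only finitely many prime divisors of $\prod_X m_X$. The prime $2$ is excluded by working only with odd primes.

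Reduction modulo $\ell$ is a ring homomorphism $\Z[1/2]\to\F_\ell$, so it commutes with matrix products and with subtracting $I_4$. Hence
\[
r_\ell(g^{-1}h)-I_4=\overline{X}.
\]
The chosen entry of $\overline{X}$ is $\overline{m_X}\,\overline{2}^{-j_X}$, which is nonzero in $\F_\ell$ since $\ell\nmid m_X$ and $2$ is invertible. Therefore $r_\ell(g)^{-1}r_\ell(h)\ne1$, that is, $r_\ell(g)\ne r_\ell(h)$.

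There is no real obstacle here. The only point that needs care is checking that reduction of $\Z[1/2]$-entries is a well-defined ring homomorphism, so that $r_\ell$ is multiplicative and its value on $g^{-1}h-I$ is computed entrywise. This follows because $\Z[1/2]\to\F_\ell$, $m/2^j\mapsto \bar m\,\bar2^{-j}$, is the localization map, which exists since $\bar 2$ is a unit when $\ell$ is odd.
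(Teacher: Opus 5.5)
Your proof is correct and follows essentially the same route as the paper. For each pair you pick a nonzero entry $m/2^j$ of a nonzero $\Z[1/2]$-matrix and choose an odd prime dividing none of the numerators $m$; the only difference is that you work with $g^{-1}h-I_4$ where the paper reduces $g-h$ entrywise, so you need the multiplicativity of $r_\ell$ and the paper does not.
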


\begin{proof}
For each pair of distinct matrices in $E$, choose a nonzero entry of their
difference. Write each chosen entry as $m/2^j$ with $m\in\Z\setminus\{0\}$.
An odd prime dividing none of these finitely many numerators exists. Every
chosen difference remains nonzero modulo that prime, so all pairs remain
distinct. The assertion is immediate if $E$ has at most one element.
\end{proof}

On the separable Hilbert space
\[
\cK=\bigoplus_{\ell\in\cP}\ell^2(G_\ell)
\]
define
\begin{equation}\label{eq:algebra}
u_g=\bigoplus_{\ell\in\cP}\pi_\ell(g),
\qquad
A=C^*(u_g:g\in\Gamma)\subset\B(\cK).
\end{equation}

\begin{proposition}\label{prop:factor}
The algebra $A$ is nonzero, separable, unital, residually finite dimensional,
and quasidiagonal. There is a unital surjective $*$-homomorphism
\begin{equation}\label{eq:regular-map}
\theta:A\longrightarrow C^*_r(\Gamma),
\qquad
\theta(u_g)=\lambda_\Gamma(g).
\end{equation}
\end{proposition}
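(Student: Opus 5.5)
The routine properties come first. $A$ is generated by countably many unitaries $u_g$, and $\Gamma$ is countable, so $A$ is separable. Since $u_e=\bigoplus_\ell \pi_\ell(e)=I_{\cK}$, $A$ is unital, and it is nonzero because $\cK\neq0$.

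For residual finite dimensionality, I will use the compressions to the summands. Let $P_\ell$ be the projection onto $\ell^2(G_\ell)$. Each $P_\ell$ commutes with every $u_g$, so $a\mapsto P_\ell a|_{\ell^2(G_\ell)}$ is a unital $*$-representation of $A$ on the finite-dimensional space $\ell^2(G_\ell)$; it is finite-dimensional because $G_\ell\subset\GL_4(\F_\ell)$ is finite. If every such representation kills $a\in A$, then $a=0$, since $a=\bigoplus_\ell P_\ell aP_\ell$. Hence these representations separate points. Quasidiagonality follows from the definition: enumerate $\cP$, let $R_n$ be the sum of the $P_\ell$ over the first $n$ primes, and note that these are finite-rank, increasing, converge strongly to $I$, and commute exactly with $A$.

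The substantive part is $\theta$, and the plan is to show that for every finitely supported $x=\sum_g c_g g\in\C\Gamma$,
\[
\norm{\lambda_\Gamma(x)}\le \norm{\textstyle\sum_g c_g u_g}=\sup_\ell\norm{\lambda_{G_\ell}(r_\ell(x))}.
\]
Then $u_g\mapsto\lambda_\Gamma(g)$ extends to a contractive $*$-homomorphism on the dense $*$-subalgebra spanned by the $u_g$, and hence to $A$. It is unital, and its range is closed and contains every $\lambda_\Gamma(g)$, so it is surjective.

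To prove the inequality, I will compare positive-definite functions. The key fact is that the norm of $\lambda_\Gamma(x)$ is governed by finitely supported vectors: for $\xi\in\C\Gamma$ finitely supported with $\norm{\xi}=1$, the quantity $\norm{\lambda_\Gamma(x)\xi}^2=\langle \lambda_\Gamma(x^*x)\delta_e\ast\xi,\xi\rangle$ involves the group elements only through a finite set $F=\operatorname{supp}(x)\cdot\operatorname{supp}(\xi)$ together with products among them. By Lemma~\ref{lem:separation}, choose $\ell$ with $r_\ell$ injective on the finite set $E=\operatorname{supp}(x)^{-1}\operatorname{supp}(x)\operatorname{supp}(\xi)\cup\operatorname{supp}(\xi)\cup\operatorname{supp}(x)\operatorname{supp}(\xi)$ (enlarged as needed). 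Define $\eta=\sum_h \xi(h)\delta_{r_\ell(h)}\in\ell^2(G_\ell)$. Injectivity on $\operatorname{supp}(\xi)$ gives $\norm{\eta}=\norm{\xi}$. Injectivity on $\operatorname{supp}(x)\operatorname{supp}(\xi)$ means distinct products $gh$ stay distinct after reduction, so $\lambda_{G_\ell}(r_\ell(x))\eta$ is the transported copy of $\lambda_\Gamma(x)\xi$ and has the same norm. Taking the supremum over $\xi$ gives the inequality.

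The main point of care is exactly this bookkeeping: coincidences $g h=g'h'$ must be preserved and no new coincidences created. Injectivity of $r_\ell$ on the product set $\operatorname{supp}(x)\operatorname{supp}(\xi)$ suffices for this, since $r_\ell$ is a homomorphism.
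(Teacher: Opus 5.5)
Your proposal is correct and follows the paper's proof essentially verbatim. The summand projections give both residual finite dimensionality and the quasidiagonality witness, and the bound $\norm{\lambda_\Gamma(x)}\le\norm{u(x)}$ comes from transporting finitely supported vectors along $\delta_g\mapsto\delta_{r_\ell(g)}$ for a prime $\ell$ with $r_\ell$ injective on $\operatorname{supp}(\xi)\cup\operatorname{supp}(x)\operatorname{supp}(\xi)$; as your final paragraph recognizes, the extra set $\operatorname{supp}(x)^{-1}\operatorname{supp}(x)\operatorname{supp}(\xi)$ and the positive-definite-function framing are unnecessary.
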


\begin{proof}
Since $\Gamma$ is countable, $A$ is separable, and $u_1=I_{\cK}\ne0$.
The concrete inclusion in \eqref{eq:algebra} is faithful. Enumerate the odd
primes and let $P_N$ project onto the first $N$ whole summands of $\cK$.
These projections have finite rank, increase strongly to $I_{\cK}$, and
commute with every generator $u_g$, hence with every element of $A$.
Thus this representation witnesses quasidiagonality. Restriction to each
finite-dimensional summand is a $*$-representation of $A$, and these
restrictions separate its points, proving residual finite dimensionality.

To prove \eqref{eq:regular-map}, let
$x=\sum_{s\in S}c_s s\in\C[\Gamma]$ have finite support and let
$\eta\in\ell^2(\Gamma)$ have finite support $F$. Choose $\ell$ so that
$r_\ell$ is injective on $F\cup SF$. The basis assignment
$\delta_g\mapsto\delta_{r_\ell(g)}$ is then an isometry on the span of
these basis vectors. Denoting it by $J$, expansion of the two finite sums
gives
\[
J\lambda_\Gamma(x)\eta=\pi_\ell(x)J\eta.
\]
In particular,
\begin{align*}
\norm{\lambda_\Gamma(x)\eta}
&\le\norm{\pi_\ell(x)}\norm{\eta}\\
&\le\sup_{m\in\cP}\norm{\pi_m(x)}\norm{\eta}
=\norm{u(x)}\norm{\eta}.
\end{align*}
Taking the supremum over finite-support unit vectors yields
\begin{equation}\label{eq:regular-bound}
\norm{\lambda_\Gamma(x)}\le\norm{u(x)}
\qquad(x\in\C[\Gamma]).
\end{equation}
Thus the assignment $u(x)\mapsto\lambda_\Gamma(x)$ is well defined and
contractive on the dense group-algebra image in $A$. It extends to a unital
$*$-homomorphism $\theta$. Its image is closed and contains the regular
generators, so it is all of $C^*_r(\Gamma)$.
\end{proof}

\section{The subgroup and its invariant-vector projection}

Inside $\Gamma$, set
\begin{equation}\label{eq:subgroup}
H=\left\{
\begin{pmatrix}M&b\\0&1\end{pmatrix}:
M\in\SL_3(\Z),\ b\in\Z^3
\right\},
\qquad
t=\begin{pmatrix}2I_3&0\\0&1\end{pmatrix}.
\end{equation}
The group $H$ is $\Z^3\rtimes\SL_3(\Z)$ for the usual linear action.
It has ordinary Kazhdan property~$(T)$ by
\cite[Example~1.7.4(i), p.~68]{BHV}. That source writes the acting factor
first, as $\SL_3(\Z)\ltimes\Z^3$.

Direct multiplication gives
\begin{align}
tHt^{-1}
&=\{(M,2b):M\in\SL_3(\Z),\ b\in\Z^3\}
\subsetneq H,\label{eq:contracted}\\
t^{-1}Ht
&=\{(M,b/2):M\in\SL_3(\Z),\ b\in\Z^3\}
\supsetneq H.\label{eq:expanded}
\end{align}
For example, translation by the first coordinate vector $e_1$ belongs to
$H\setminus tHt^{-1}$, and translation by $e_1/2$ belongs to
$t^{-1}Ht\setminus H$.

We next construct the projection supplied by property~$(T)$.

\begin{lemma}\label{lem:kazhdan}
There is a projection $e_H\in C^*(H)$ such that, for every unitary
representation $U$ of $H$, its integrated representation maps $e_H$ to the
orthogonal projection onto the $H$-invariant vectors.
\end{lemma}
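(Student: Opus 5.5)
We will prove the lemma with the standard Kazhdan-projection construction of Akemann--Walter, using only that $H$ has property~$(T)$ and is finitely generated. The projection $e_H$ will be obtained from a spectral gap of a self-adjoint Laplacian in $C^*(H)$.

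\emph{Setting up the Laplacian.} Property~$(T)$ implies that $H$ is finitely generated. We fix a finite symmetric generating set $S\subset H$ and a Kazhdan constant $\varepsilon>0$ for $S$. Inside $C^*(H)$ we form
\[
h=\frac{1}{|S|}\sum_{s\in S}s,
\]
which is self-adjoint because $S=S^{-1}$ and satisfies $\norm{h}\le1$. Let $U$ be any unitary representation of $H$ on a Hilbert space $\cH_U$. A vector $\xi$ satisfies $U(h)\xi=\xi$ if and only if it is $H$-invariant. One direction is clear. For the other, $\langle U(h)\xi,\xi\rangle=\norm{\xi}^2$ together with the strict convexity of the Hilbert space forces $U(s)\xi=\xi$ for every $s\in S$, and hence for all of $H$.

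\emph{The spectral gap.} This is the key step. Let $\xi$ be a unit vector orthogonal to the invariant vectors $\cH_U^H$. The subspace $(\cH_U^H)^\perp$ is $U$-invariant and contains no nonzero invariant vectors. By the Kazhdan property applied to the subrepresentation on it, there is an $s\in S$ with $\norm{U(s)\xi-\xi}\ge\varepsilon$. From
\[
\mathrm{Re}\langle U(h)\xi,\xi\rangle=1-\frac{1}{2|S|}\sum_{s\in S}\norm{U(s)\xi-\xi}^2\le 1-\frac{\varepsilon^2}{2|S|}
\]
we conclude that $\spc(U(h))\subset[-1,1-\delta]\cup\{1\}$ with $\delta=\varepsilon^2/(2|S|)$, uniformly in $U$. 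This uses that $U(h)$ is self-adjoint and leaves $(\cH_U^H)^\perp$ invariant. Applying this bound to the universal representation shows that $\spc(h)\subset[-1,1-\delta]\cup\{1\}$ in $C^*(H)$ itself.

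\emph{Defining the projection.} We choose a continuous function $f$ on $[-1,1]$ with $f=0$ on $[-1,1-\delta]$ and $f(1)=1$, and set $e_H=f(h)\in C^*(H)$. Since $f$ takes only the values $0$ and $1$ on $\spc(h)$, we have $f^2=f=\bar f$ there, so $e_H$ is a projection. Continuous functional calculus commutes with $*$-homomorphisms, so $U(e_H)=f(U(h))$. Because $\spc(U(h))$ also lies in $[-1,1-\delta]\cup\{1\}$, this operator is the spectral projection of $U(h)$ for the eigenvalue $1$. That projection is the orthogonal projection onto $\ker(U(h)-1)=\cH_U^H$, by the first step.

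The main obstacle is the uniformity of the spectral gap across all representations, so that a single element of $C^*(H)$ works simultaneously for every $U$. This is exactly where the Kazhdan constant, given by \cite[Example~1.7.4(i)]{BHV}, is used, together with the passage from $U$ to its subrepresentation on $(\cH_U^H)^\perp$. One minor point is degenerate representations: the lemma concerns unitary representations of $H$, so their integrated forms are unital and no issue arises.
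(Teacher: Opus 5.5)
Your proof is correct and is essentially the paper's argument. The paper uses the Laplacian $d=\sum_{s}(1-z_s)^*(1-z_s)$, proves the gap $\spc(d)\subset\{0\}\cup[\varepsilon^2,4|S_H|]$ in the universal representation, and sets $e_H=f(d)$; for your symmetric $S$ one has $d=2|S|(1-h)$, so your averaging operator, with its gap below the eigenvalue $1$, is the same element up to an affine change of variable, and you additionally spell out details the paper leaves implicit, such as $\ker(U(h)-1)=\cH_U^H$.
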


\begin{proof}
Choose a finite Kazhdan set $S_H\subset H$ and a constant $\varepsilon>0$.
In a unitary representation $U$, split off the invariant-vector subspace.
Its orthogonal complement is invariant and contains no nonzero invariant
vectors. On this complement the Kazhdan gap implies
\begin{equation}\label{eq:gap}
\sum_{s\in S_H}\norm{U_s\xi-\xi}^2
\ge\varepsilon^2\norm{\xi}^2.
\end{equation}
Writing $z_h$ for the canonical unitaries of $C^*(H)$, form
\[
d=\sum_{s\in S_H}(1-z_s)^*(1-z_s).
\]
In a faithful universal representation, \eqref{eq:gap} shows that
\[
\spc(d)\subset\{0\}\cup[\varepsilon^2,4|S_H|].
\]
Choose a continuous function $f$ with $f(0)=1$ and
$f|_{[\varepsilon^2,\infty)}=0$, and put $e_H=f(d)$. This is a projection.
The same gap in every representation shows that its image is exactly the
invariant-vector projection. The trivial representation shows $e_H\ne0$.
\end{proof}

Put $D=A\tmax A$. The diagonal unitary representation
$h\mapsto u_h\otimes u_h$ induces a unital $*$-homomorphism
\[
j:C^*(H)\longrightarrow D.
\]
Define
\begin{equation}\label{eq:projections}
p=j(e_H),\qquad w=u_t\otimes u_t,\qquad q=w^*pw.
\end{equation}

\begin{lemma}\label{lem:order}
The projections in \eqref{eq:projections} satisfy $q\le p$ in $D$.
\end{lemma}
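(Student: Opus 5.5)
The plan is to verify $q\le p$ in a faithful representation of $D$. There both projections become explicit invariant-vector projections, and the inequality reduces to the subgroup inclusion \eqref{eq:expanded}.

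First, fix a faithful unital representation $\rho:D\to\B(\cH)$; for instance, take the universal one. Since $g\mapsto u_g$ is a group homomorphism $\Gamma\to U(A)$, because each $\pi_\ell$ is, the map $g\mapsto W_g:=\rho(u_g\otimes u_g)$ is a unitary representation of $\Gamma$ on $\cH$. Its restriction to $H$ integrates to $\rho\circ j$. By Lemma~\ref{lem:kazhdan}, the projection $P:=\rho(p)=\rho(j(e_H))$ is the orthogonal projection onto
\[
\cH^H=\{\xi\in\cH: W_h\xi=\xi\ \text{for all } h\in H\}.
\]

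Next, we have $\rho(w)=W_t$ and $Q:=\rho(q)=W_t^*PW_t$. Since $W_t$ is unitary, $Q$ is the orthogonal projection onto $W_t^*\cH^H=W_{t^{-1}}\cH^H$. A vector $\xi$ lies in this range exactly when $W_t\xi$ is $H$-invariant, that is, when $W_{t^{-1}ht}\xi=\xi$ for all $h\in H$. Here we use the homomorphism identity $W_t^*W_hW_t=W_{t^{-1}ht}$. Thus $\operatorname{ran}Q$ is the space of $t^{-1}Ht$-invariant vectors.

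By \eqref{eq:expanded}, $H\subset t^{-1}Ht$. Hence every $t^{-1}Ht$-invariant vector is $H$-invariant, so $\operatorname{ran}Q\subset\operatorname{ran}P$. For projections this is equivalent to $Q\le P$. Faithfulness of $\rho$ then gives $q\le p$ in $D$.

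I expect no serious obstacle. The only point needing care is that Lemma~\ref{lem:kazhdan} is applied to the representation $W|_H$, whose integrated form is exactly $\rho\circ j$. This holds because $j$ was defined from the same diagonal unitaries $u_h\otimes u_h$.
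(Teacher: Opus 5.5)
Your proposal is correct and matches the paper's proof: in a faithful representation you identify $\rho(p)$ and $\rho(q)$ as the projections onto diagonal-$H$-invariant and diagonal-$t^{-1}Ht$-invariant vectors, and then use $H\subset t^{-1}Ht$ from \eqref{eq:expanded}. Your added checks, that $g\mapsto u_g$ is a homomorphism and that $\rho\circ j$ is the integrated form of $W|_H$, only make explicit what the paper leaves implicit.
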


\begin{proof}
In any unital representation $\rho:D\to\B(\cH)$, Lemma~\ref{lem:kazhdan}
identifies $\rho(p)$ with the projection onto vectors fixed by
$\rho(u_h\otimes u_h)$ for every $h\in H$. If $W=\rho(w)$, the range of
$\rho(q)=W^*\rho(p)W$ consists of those $\xi$ for which $W\xi$ is fixed by
the diagonal $H$ action. Equivalently, $\xi$ is fixed by
\[
\rho(u_{t^{-1}ht}\otimes u_{t^{-1}ht})
\quad\text{for every }h\in H.
\]
Thus $\rho(q)$ projects onto diagonal-$t^{-1}Ht$-invariant vectors.
By \eqref{eq:expanded}, this subspace is contained in the range of
$\rho(p)$. Taking $\rho$ faithful proves $q\le p$ in $D$.
\end{proof}

\section{A representation detecting strictness}

Let
\[
L=\Delta H=\{(h,h):h\in H\},
\qquad
X=(\Gamma\times\Gamma)/L,
\]
where $X$ is the set of left cosets. Write $\sigma$ for the quasi-regular
representation on $\ell^2(X)$:
\[
\sigma(a,b)\delta_{gL}=\delta_{(a,b)gL}.
\]

\begin{proposition}\label{prop:cosets}
There is a unital representation $\Sigma:D\to\B(\ell^2(X))$ such that
\[
\Sigma(u_a\otimes u_b)=\sigma(a,b)
\qquad(a,b\in\Gamma).
\]
\end{proposition}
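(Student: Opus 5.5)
The plan is to build $\Sigma$ from two commuting representations of $A$ and then use the universal property of $D=A\tmax A$. I will take $\sigma_1(a)=\sigma(a,1)$ and $\sigma_2(b)=\sigma(1,b)$. These are unitary representations of $\Gamma$ on $\ell^2(X)$. Their ranges commute, because $(a,1)$ and $(1,b)$ commute in $\Gamma\times\Gamma$ and $\sigma$ is a representation of the product. If each $\sigma_i$ extends to a unital $*$-representation $\tilde\sigma_i$ of $A$ with $\tilde\sigma_i(u_g)=\sigma_i(g)$, then the ranges of $\tilde\sigma_1$ and $\tilde\sigma_2$ still commute, since they are the closures of the spans of the commuting generators. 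The universal property of the maximal tensor product then produces a unital representation $\Sigma$ of $D$ with $\Sigma(u_a\otimes u_b)=\tilde\sigma_1(u_a)\tilde\sigma_2(u_b)=\sigma(a,b)$.

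\textbf{The main step is continuity for the specific completion $A$.} For each factor, I will show that the restriction of $\sigma$ is a multiple of $\lambda_\Gamma$, and then invoke \eqref{eq:regular-bound}.

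First, the first factor acts freely on $X$. Suppose $(a,1)(g_1,g_2)L=(g_1,g_2)L$. Then
\[
(g_1^{-1}ag_1,\,1)=(g_1,g_2)^{-1}(a,1)(g_1,g_2)\in\Delta H .
\]
Both coordinates of an element of $\Delta H$ coincide, so $g_1^{-1}ag_1=1$, and hence $a=1$. The same argument applies to the second factor.

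A free action of $\Gamma$ on a set $X$ decomposes $X$ into orbits, each of which is $\Gamma$-equivariantly isomorphic to $\Gamma$ via $g\mapsto g\cdot x_0$ for a chosen base point $x_0$. Consequently $\ell^2(X)\cong\bigoplus_{\text{orbits}}\ell^2(\Gamma)$, and under this identification $\sigma_1\cong\bigoplus\lambda_\Gamma$. This uses only freeness: the orbit set is nonempty, but no other counting is needed.

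It follows that, for $x\in\C[\Gamma]$,
\[
\norm{\sigma_1(x)}=\norm{\lambda_\Gamma(x)}\le\norm{u(x)} ,
\]
by \eqref{eq:regular-bound}. Equivalently, $\tilde\sigma_1$ is the amplification of the map $\theta$ from Proposition~\ref{prop:factor}, composed with the unitary identification above. In either form, $\sigma_1$ extends contractively from the dense image of $\C[\Gamma]$ to a unital $*$-homomorphism on $A$. The same argument gives $\tilde\sigma_2$.

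I expect the only delicate point to be this continuity step. Without it, $\sigma$ would define a representation of $C^*(\Gamma)\tmax C^*(\Gamma)$, not of $A\tmax A$. Freeness of each factor's action reduces the question to the regular representation, which is already dominated by the norm of $A$. Unitality is immediate, since $\Sigma(u_1\otimes u_1)=\sigma(1,1)=I$.
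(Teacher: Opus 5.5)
Your proposal is correct and follows the paper's own proof essentially step for step. Freeness of each factor's action on $X$ makes each restriction of $\sigma$ a direct sum of copies of $\lambda_\Gamma$, so \eqref{eq:regular-bound} (equivalently, the map $\theta$ of Proposition~\ref{prop:factor}) extends both restrictions to commuting representations of $A$, and the universal property of $A\tmax A$ then yields $\Sigma$.
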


\begin{proof}
Each individual factor acts freely on $X$. Indeed, if $(a,1)$ stabilizes
$(g_1,g_2)L$, then
\[
(g_1^{-1}ag_1,1)\in\Delta H,
\]
which forces $a=1$. For the second factor the corresponding condition is
$(1,g_2^{-1}bg_2)\in\Delta H$, which forces $b=1$.
Every orbit of either restricted action is therefore equivariantly a copy
of $\Gamma$ with its left translation action. Each restricted unitary
representation is a direct sum of copies of $\lambda_\Gamma$.

By Proposition~\ref{prop:factor}, both restrictions extend to
representations $\alpha,\beta:A\to\B(\ell^2(X))$ on this same Hilbert
space, with
\[
\alpha(u_a)=\sigma(a,1),\qquad
\beta(u_b)=\sigma(1,b).
\]
Their ranges commute, since their generating group actions commute and
commutation is preserved under norm limits. The universal property of the
maximal tensor product now gives $\Sigma$, with
$\Sigma(a\otimes b)=\alpha(a)\beta(b)$.
\end{proof}

\begin{proposition}\label{prop:strict}
The inequality in Lemma~\ref{lem:order} is strict: $q<p$ in $D$.
\end{proposition}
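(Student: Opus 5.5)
Since $q\le p$ by Lemma~\ref{lem:order}, it suffices to find one representation of $D$ in which $q\ne p$. We use the representation $\Sigma$ of Proposition~\ref{prop:cosets} on $\ell^2(X)$. We will exhibit a vector in the range of $\Sigma(p)$ that is not in the range of $\Sigma(q)$. Lemma~\ref{lem:kazhdan} gives a concrete description of both ranges. The range of $\Sigma(p)$ is the space of vectors fixed by $\sigma(h,h)$ for all $h\in H$. By the argument in the proof of Lemma~\ref{lem:order}, the range of $\Sigma(q)$ is the space of vectors fixed by $\sigma(h,h)$ for all $h\in t^{-1}Ht$.

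The natural candidate is $\xi=\delta_{L}$, the basis vector at the identity coset. For $h\in H$ we have $\sigma(h,h)\delta_L=\delta_{(h,h)L}=\delta_L$, since $(h,h)\in L=\Delta H$. Hence $\Sigma(p)\xi=\xi$.

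Next take $s\in t^{-1}Ht\setminus H$, for example translation by $e_1/2$, as noted after \eqref{eq:expanded}. Then $\sigma(s,s)\delta_L=\delta_{(s,s)L}$. This equals $\delta_L$ only if $(s,s)\in\Delta H$, that is, only if $s\in H$, which is false. Since distinct cosets give orthogonal basis vectors, $\sigma(s,s)\xi\ne\xi$. So $\xi$ is not fixed by the diagonal action of $t^{-1}Ht$, and $\Sigma(q)\xi\ne\xi$. It follows that $\Sigma(q)\ne\Sigma(p)$, hence $q\ne p$, and together with $q\le p$ this gives $q<p$ in $D$.

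The analytic difficulty was handled in Proposition~\ref{prop:cosets}, which shows that $\Sigma$ is defined on $D$ itself. What remains is to apply Lemma~\ref{lem:kazhdan} to the unitary representation $h\mapsto\Sigma(u_h\otimes u_h)=\sigma(h,h)$. We need $\Sigma\circ j$ to be the integrated form of this representation of $H$. This holds because both are $*$-homomorphisms out of $C^*(H)$ that agree on the canonical unitaries $z_h$, which generate $C^*(H)$. Then $\Sigma(p)$ is exactly the projection onto the $H$-fixed vectors. Finally, writing $W=\sigma(t,t)$, we have $\Sigma(q)=W^*\Sigma(p)W$. Its range consists of those $\eta$ with $W\eta$ fixed by $\sigma(h,h)$ for all $h\in H$, which is the same as $\eta$ being fixed by $\sigma(t^{-1}ht,t^{-1}ht)$ for all $h\in H$. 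This confirms the description of the range of $\Sigma(q)$ used above.
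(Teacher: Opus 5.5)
Your proposal is correct and follows the paper's proof essentially step for step. It uses the same vector $\delta_L$, the same witness $(I_3,e_1/2)\in t^{-1}Ht\setminus H$, and the same detection of $p\ne q$ through $\Sigma$. Your added check that $\Sigma\circ j$ is the integrated form of $h\mapsto\sigma(h,h)$, since both agree on the generating unitaries of $C^*(H)$, is a detail the paper leaves implicit in the proof of Lemma~\ref{lem:order}.
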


\begin{proof}
Let $\xi=\delta_L\in\ell^2(X)$. It is fixed by every element of $\Delta H$,
so $\Sigma(p)\xi=\xi$. Let $h_0=(I_3,e_1)\in H$ and set
\[
k=t^{-1}h_0t=(I_3,e_1/2)\in t^{-1}Ht\setminus H.
\]
Since $(k,k)\notin L$, the cosets $(k,k)L$ and $L$ differ. Hence
\[
\sigma(k,k)\xi=\delta_{(k,k)L}\ne\delta_L.
\]
Thus $\xi$ is not invariant under the diagonal action of $t^{-1}Ht$.
By the invariant-subspace description in the proof of
Lemma~\ref{lem:order}, $\Sigma(q)\xi\ne\xi$. Therefore
$\Sigma(p-q)\xi\ne0$, which implies $p-q\ne0$ in $D$.
Together with $q\le p$, this proves the assertion.
\end{proof}

\section{The proper isometry}

\begin{proof}[Proof of Theorem~\ref{thm:main}]
Use the exact algebra $A$ of \eqref{eq:algebra}. Its asserted factor
properties were proved in Proposition~\ref{prop:factor}. With $p,w,q$ as
in \eqref{eq:projections}, let $v=w^*p$. Then
\[
v^*v=p,\qquad vv^*=q<p.
\]
Moreover, $vp=v$ and $qv=v$; since $pq=q$, it follows that $pv=v$.
Thus $v\in pDp$. Set
\[
V=v+(1-p)=w^*p+(1-p).
\]
All mixed terms with $1-p$ vanish in both products. Consequently,
\[
V^*V=p+(1-p)=1,
\qquad
VV^*=q+(1-p)=1-(p-q)\ne1.
\]
This is a proper isometry in $D$ itself. Lemma~\ref{lem:finite} therefore
excludes quasidiagonality of $D$, completing the proof.
\end{proof}

\begin{remark}
The representation $\Sigma$ is used only to detect that $p-q$ is nonzero;
it need not be faithful. The order relation $q\le p$ is established
separately in a faithful representation of $D$. Both steps are needed to
obtain the proper isometry in $D$.
\end{remark}

\begin{remark}
The counterexample uses nonzero unital factors, and hence also refutes the
universal preservation statement when zero and nonunital separable
algebras are allowed. No permanence of quasidiagonality under quotients is
used: the map $\theta$ serves only to construct the two commuting
representations in Proposition~\ref{prop:cosets}.
\end{remark}

\subsection*{Acknowledgements}
The author acknowledges the substantial assistance of OpenAI GPT-family
models in exploring possible approaches, developing the counterexample,
checking its individual steps in separate model contexts, and preparing
the exposition and the \LaTeX{} source. The author also notes the recent
appearance of similar AI-assisted results and acknowledges the possibility
of overlap with arguments generated independently using AI. No specific
claim of equivalence or priority is made here.

\end{document}